\documentclass[reqno]{amsart}
\usepackage{graphics,verbatim,color,amsthm}
\usepackage{graphicx}
\usepackage[all]{xy}

\theoremstyle{plain}
\newtheorem{theorem}{Theorem}
\newtheorem{proposition}{Proposition}
\newtheorem{lemma}{Lemma}  
 
\newtheorem*{conjecture}{Conjecture}
\newtheorem{definition}{Definition}

\theoremstyle{remark}

\theoremstyle{definition}

\def\R{\mathbb{R}}
\def\Z{\mathbb{Z}}

\def\E{\mathbb{E}}
\def\L{\mathbb{L}}

\def\M{\mathbb{M}}

\def\H{\mathbb{H}}

\def\l{\lambda}


\def\smallskip{\par\vspace{1mm}}
\def\medskip{\par\vspace{2mm}}
\def\bigskip{\par\vspace{3mm}}

\def\sR{subRiemannian } 
\def\Ri{Riemannian } 

\def\fr#1#2{\frac{#1}{#2}}




\newcount\equationcount
\def\thenumber{0}
\def\eq#1{\global\advance\equationcount by 1
   \def\thenumber{\number\equationcount}
                        {$$#1\eqno(\thenumber)$$}}



\def\on{ orthonormal }

\newcommand{\dd}[2]
{
{{\partial #1}   \over {\partial #2}}
}

\begin{document}

\title[]{Keplerian Dynamics on the Heisenberg Group and Elsewhere}
 
\author{Richard Montgomery and Corey Shanbrom}
 
\address{Dept. of Mathematics\\ University of California, Santa Cruz\\ Santa Cruz CA 95064}

\email{rmont@count.ucsc.edu}

\address{Dept. of Mathematics\\ University of California, Santa Cruz\\ Santa Cruz CA 95064}

\email{cshanbro@ucsc.edu}

\date{December 11, 2012}

\keywords{Carnot group, Heisenberg group, Kepler problem, Integrable system, Fundamental solution to Laplacian}

\subjclass[2010]{53C17, 37N05, 70H06, 37J35, 53D20}

\thanks{For Jerry}

\begin{abstract}  Posing Kepler's problem of motion around a fixed ``sun'' requires the geometric mechanician to choose a metric and a Laplacian. The metric provides the kinetic energy. The fundamental solution to the Laplacian (with delta source at the ``sun'') provides the potential energy.  Posing Kepler's three laws (with input from Galileo) requires symmetry conditions.  The metric space must be homogeneous, isotropic, and admit dilations.  Any Riemannian manifold enjoying these three symmetry properties is Euclidean.  So if we want a semblance of Kepler's three laws to hold but also want to leave the Euclidean realm, we are forced out of the realm of Riemannian geometries.  The Heisenberg group (a subRiemannian geometry) and lattices provide the simplest examples of metric spaces enjoying a semblance of all three of the Keplerian symmetries.  We report success in posing, and solving, the Kepler problem on the Heisenberg group.  We report failures in posing the Kepler problem on the rank two lattice and partial success in solving the problem on the integers.  We pose a number of questions.  
 \end{abstract}

\maketitle


\section{Introduction}

Newton formulated and solved what we   call today ``Kepler's problem''
-- the problem whose negative energy solutions are  Keplerian ellipses. 
The essential backdrop to the problem is Euclidean three-space and  its
group of isometries and scalings.  Can we pose   Kepler's
problem on an arbitrary metric space?    What properties must
the space have if Kepler's three laws, or ghosts of these laws,  are to hold?

In `Foundations of Mechanics' (\cite{Foundations}), Abraham and Marsden formulate classical mechanics
as dynamical systems on the tangent bundle or cotangent bundle of a Riemannian manifold, which they
call  ``natural mechanical systems''.
 The \Ri metric defines the kinetic energy. One   must choose a potential energy.   
 In order to formulate   Kepler's problem   on our manifold,  we take this potential
 to be the fundamental solution to the Laplacian.  We choose a point on the manifold to be our ``sun," which is the delta function source of the fundamental solution.
Following Galileo we assume that the choice of location of the sun does not matter:
that is, we will assume that our space is homogeneous. 

The mildest \Ri departures from Euclidean space are the  spaces of constant curvature:
the sphere and hyperbolic space.   About a century and a half  before `Foundations,'
  Lobachevksy (\cite{Lobachevsky}), one of the founders of hyperbolic geometry,
 posed the Kepler problem as a ``natural mechanical system''  on hyperbolic space. 
Later, Serret  (\cite{Serret}) posed the Kepler problem on the sphere.  We are grateful to
F. Diacu for these references and his survey \cite{Diacu1}.
Kepler's 1st and 2nd laws hold in  each of the three   constant curvature geometries:
hyperbolic space, the 3-sphere, and the original flat Euclidean space.
But Kepler's  3rd law 
fails  for these  non-flat geometries  for the simple reason that they  
  admit no continuous  scaling symmetries, or ``dilations.''

We   argue that in order to even formulate Kepler's third law
our metric space must admit dilations.  But if a space  admits dilations, and is not Euclidean, then it  
cannot  even  be  Riemannian!  (We sketch the proof of this fact below.) 
The non-Euclidean spaces which admit  dilations are subRiemannian: they are the   Carnot groups.
The simplest Carnot group is  the   Heisenberg group.   

This observation brings us to our main problem: pose and solve the  
the Kepler problem on the Heisenberg group.
We will pose it.  We will not fully solve it.
We will  show that all periodic solutions to the Kepler problem on the Heisenberg group
must lie on the zero energy surface, and that the problem is integrable
{\it when restricted to this zero energy surface}.  Such solutions are described in Figures \ref{orbits} and \ref{orbit1}.  A modified version of Kepler's third law holds for the periodic solutions.

To  write down the  Kepler-Heisenberg problem we must
have an explicit expression for the  potential: it is  the  fundamental solution for the {\it sub}Laplacian on the
Heisenberg group. 
Luckily,  Folland  (\cite{Folland}) found such an expression    in 1970. 

We will also attempt to pose and solve Kepler's problem on some lattices.
Lattices  almost admit dilations: we can scale by  positive integer scaling factors,
but we cannot invert these scaling factors. 
 The  integers form the simplest  lattice.  
 We will pose and solve a Kepler problem on the integers.
 Our `solutions' are of a high school nature.  (We apologize in advance
 if our treatment here   embarrasses   readers with any skill 
 in numerical methods and discretization.)   These solutions  are  indicated in Figure \ref{intpic}.
 
 We then try to pose and solve the  Kepler problem on the  integer lattice
in the plane where   we run into fundamental problems which  
  lead us to believe that the very definition of   a  discrete dynamical  system
 is not yet well formulated.   The heart of this problem is that
 the differences of  values of the lattice potential -- that being the 
 fundamental solution of the lattice Laplacian -- are irrational.
  
{\sc Dedication and Acknowledgements.}
This article is dedicated to the memory of Jerry
and in thanks for all his inspiration.
We would also like to thank the GMC group,
in particular, David Mart\'{i}n de Diego, Juan Carlos Marrero,
and Edith Padron for inviting us to the summer school in 2011
outside of Madrid.
The formulation of the Kepler-Heisenberg  problem was inspired by talking with many of the participants at that summer school.

\section{Kepler's 3 laws in a metric space.} 
Let's recall    Kepler's three laws for the motion of planets around the sun.  

K1. Planets  travel on   ellipses with one focus the   sun. 

K2. Equal areas are swept out in equal times.  This law is equivalent to the conservation
of the planet's angular momentum about the sun.

K3.  Period-Size:  The   period $T$  of an orbit  and its  size $a$ (semi-major axis) are related by a 
universal monomial relation $a^3 =  C T^2$.   

The Keplerian planet  moves in a Euclidean space.
 Do  Kepler's    laws  even  make sense on  a  general metric space? 
If not, what restrictions must we impose on the metric space in order to make sense of a particular law?
We discuss what is required  of our metric space in order to formulate the corresponding law.

{\bf  K1.}  We can define an ``ellipsoid''  for  any metric space $X$. Fix two foci  $S, F \in X$  and    a 
positive number
$2a$.  Consider the locus of points $x \in X$ for which $d(S, x) + d(x, F) = 2a$.
If  this locus is  to be a curve then the  metric space must be two-dimensional,  
for example, a  smooth surface.   K1 requires then that $X$ is a two-dimensional, 
or that its Keplerian dynamics can be reduced to two-dimensions. 
 Kepler's problem has been posed and solved  
  satisfactorily  on the two-sphere and on the hyperbolic plane as described in the introduction.  Its   solutions satisfy K1.  

{\bf K2 }  is equivalent to conservation of angular momentum.  
Angular momentum is conserved if the   kinetic and potential energies in
Newton's formulation of the Kepler problem are invariant under rotations about the sun.
This requires isotropicness: all directions in the metric space  are the same, at least through
the sun.       
 The two-sphere and the hyperbolic plane enjoy rotational  symmetry and hence K2.

{\bf K3 }  is a  scaling law.  It is an immediate 
consequence of  the fact that the   Newtonian  potential $V(x) = c/|x|$
is homogeneous of degree $-1$.   This homogeneity implies the
space-time symmetry $x, t \mapsto \lambda x,  \lambda ^{3/2} t$, which is to say:
if $x(t)$ solves Kepler's equation then so does $\lambda x (\lambda ^{-3/2} t)$.
From one periodic solution $x(t)$  we generate a one-parameter    family $x_{\lambda}$.
The energy-period relation in K3 for  this family follows from the scaling symmetry

\subsubsection{Kepler's third law for homogeneous potentials in Euclidean space.} \label{thirdlaw}

Any homogeneous  potential $V$ on a Euclidean space enjoys a
 version of K3.  Homogeneity is a scaling symmetry:
 $x \mapsto \lambda x \implies V(x) \mapsto V(\lambda x) = \lambda ^{-\alpha} V(x)$.
 We try to extend the symmetry to time and velocities by a power law ansatz:
 $(x, t, v) \mapsto (\lambda x, \lambda ^{\beta} t,  \lambda ^{-\nu} v)$.
 Balancing   the resulting  scalings of potential and kinetic energies
 implies that $\nu =  \alpha/2$.  The requirement $v = dx/dt$ yields $\beta = 1+ (\alpha/2)$.  We are led to 
    the extended    scalings 
 $$(x, t,v ) \mapsto (\lambda x, \lambda ^{1 + \alpha/2} t ,  \lambda ^{- \alpha/2} v ).$$
In terms of   curves $\gamma(t)$, which are maps from $t$ to $x$-space,  the  scaling    operation
 is 
 $$\gamma (t) \mapsto \gamma_{\lambda} (t) = \lambda \gamma (\lambda^{- (1 + \alpha/2)} t).$$
 One verifies that if $\gamma$ satisfies Newton's equation $\ddot \gamma = - \nabla V( \gamma)$
 then so does $\gamma_{\lambda}$. (Use that $\nabla V$ is homogeneous of degree $-\alpha -1$.)
  The scaling symmetry thus  takes solutions of energy $H$ to solutions of energy $\lambda^{-\alpha} H$. 
  Now if $\gamma$ is periodic of period $T$ and with typical size $a$, then $\gamma_{\lambda}$ is periodic of
  period $\lambda ^{\beta} T = \lambda ^{1 + \alpha/2} T$ and typical size $\lambda a$. We thus arrive at our modified  K3:
  $T^2 = C a^{2 + \alpha}$.

\subsubsection{Dilations} To have an  analogue of K3,   our metric space must, like Euclidean space,    admit dilations.
\begin{definition} A dilation of the metric space $(X, d)$  with center $S \in X$ and  dilation factor $\lambda$  is a map $\delta_{\lambda}: X \to X$
which fixes $S$ and 
  satisfies $d(\delta_{\lambda} x, \delta_{\lambda} y) = \lambda d(x,y)$ for all $x, y \in X$. 
We say that the metric space  $X$ admits dilations if there is a dilation of $X$ with dilation factor $\lambda$ for each
  $\lambda > 0$.
  \end{definition} 
Spherical and hyperbolic metrics admit no dilations.  K3 fails for both.
  
 
 \section{Keplerian symmetries.}

 We  will restrict ourselves to   metric spaces  $X$ which      
\begin{align}
  &\bullet \text{are homogeneous} \\
 &\bullet \text{are isotropic} \\
 &\bullet \text{admit dilations.} 
\end{align}
We will call these three properties the {\it Keplerian symmetry assumptions}.
 
 {\it Historical Motivation.} 
 Newton's biggest victory was probably his derivation of   Kepler's laws K1,K2, K3,  from 
 more basic  laws:  
 Galilean invariance,  his equation $F = m a$, and the specific choice of   force $F$ as `$1/r^2$.'
 From these laws  he   derived what we today    call Kepler's differential equation $\ddot q = - q /|q|^3$
 and thence K1-3.  A subset of the Galilean group is the group of
 spatial isometries and this relates to   homogeneity and isotropicness.
 Dilations, as discussed above, are included so as to get a version of Kepler's third law.

We recall   the formal definition homogeneity and isotropicness.   Let  $Isom(X)$ denotes the group of isometries of $X$.
 Homogeneity  asserts that $Isom(X)$ acts transitively on $X$.   Isotropicness   asserts that $Isom(X)$
acts transitively on the space of directions through any point $S \in X$.  
The sphere and the hyperbolic plane  are homogeneous and isotropic, but they do not admit dilations.
\begin{proposition} If a \Ri manifold is homogeneous and admits dilations then it is a Euclidean space.
\end{proposition}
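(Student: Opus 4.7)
I would prove this in two stages: first, that $(M,g)$ must be flat; second, that the only flat example is $\R^n$.

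For flatness, let $\delta_\lambda$ be a dilation with fixed point $S$. A distance-scaling self-map of a Riemannian manifold is smooth (apply the Myers--Steenrod theorem to the rescaled metric $\lambda^{-2}g$), so $\delta_\lambda$ is a diffeomorphism with $\delta_\lambda^{*}g = \lambda^{2}g$. Its differential $A = d\delta_\lambda|_{S}\colon T_SM \to T_SM$ therefore factors as $A = \lambda\,O_\lambda$ for some orthogonal map $O_\lambda$. Because the Riemann $(0,4)$-tensor of a constant conformal rescaling $\lambda^{2}g$ equals $\lambda^{2}$ times that of $g$, the pullback identity $\delta_\lambda^{*}R = R^{\lambda^{2}g} = \lambda^{2}R$ evaluated at the fixed point reads $R_S(Av_1,Av_2,Av_3,Av_4) = \lambda^{2}R_S(v_1,v_2,v_3,v_4)$. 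Substituting $A = \lambda O_\lambda$ and cancelling $\lambda^{4}$ gives
\[
K(O_\lambda \Pi) \;=\; \lambda^{-2}\,K(\Pi)
\]
for every $2$-plane $\Pi \subset T_SM$, where $K$ denotes sectional curvature and orthogonality of $O_\lambda$ was used to match the area factors. The Grassmannian of $2$-planes at $S$ is compact, so $|K|$ attains a finite maximum $K_{\max}$; taking $\Pi$ to be a maximizer and choosing $\lambda<1$ would make $|K(O_\lambda \Pi)| = \lambda^{-2}K_{\max} > K_{\max}$ unless $K_{\max}=0$. Hence $R_S \equiv 0$, and transitivity of the isometry group propagates flatness to every point of $M$.

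For the second stage, a homogeneous Riemannian manifold has injectivity radius independent of the basepoint and is therefore geodesically complete, so a connected flat such $M$ is a Bieberbach quotient $\R^{n}/\Gamma$. If $\Gamma$ were nontrivial, Bieberbach's theorem would supply a pure translation in $\Gamma$, giving a closed geodesic in $M$ of minimum positive length $L_0$. Applying $\delta_\lambda$ with $\lambda<1$ would produce a closed geodesic of length $\lambda L_0 < L_0$, contradicting minimality. Hence $\Gamma$ is trivial and $M = \R^{n}$ with the Euclidean metric.

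The main technical obstacle is justifying smoothness and the formula $\delta_\lambda^{*}g = \lambda^{2}g$ purely from the metric-space definition: one must first check that $\delta_\lambda$ is a homeomorphism (injectivity is immediate; for surjectivity one can argue using the scaled metric $\lambda^{-2}g$ and the candidate inverse $\delta_{1/\lambda}$, whose composition with $\delta_\lambda$ is an isometry fixing $S$) and only then invoke Myers--Steenrod. Once this foundational step is secured, the curvature-scaling computation at $S$ and the closed-geodesic rigidity of nontrivial Bieberbach quotients make the remainder of the proof routine.
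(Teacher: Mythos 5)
Your argument is essentially correct, but it takes a genuinely different route from the paper. The paper's proof is a two-line sketch via Gromov's metric tangent cone (Prop.\ 3.15 of \cite{Gromov1}): a dilation makes $(X,d)$ isometric to $(X,\lambda d)$, so letting $\lambda \to \infty$ the space is isometric to its tangent cone at the dilation center, which for a Riemannian manifold is the Euclidean tangent space. You instead stay entirely inside Riemannian geometry: Myers--Steenrod gives smoothness and $\delta_\lambda^* g = \lambda^2 g$, the scaling of the $(0,4)$-curvature tensor at the fixed point forces all sectional curvatures to vanish there, homogeneity spreads flatness, and a dilation applied to a shortest noncontractible closed geodesic rules out nontrivial quotients $\R^n/\Gamma$. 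Your approach buys a self-contained, elementary proof that needs no Gromov--Hausdorff machinery, at the cost of the foundational work you correctly flag (surjectivity of $\delta_\lambda$ before invoking Myers--Steenrod; note completeness from homogeneity makes the image open and closed, which settles it). One small repair is needed in the second stage: Bieberbach's theorems concern crystallographic (cocompact) groups, and a noncompact flat quotient need not contain a pure translation (e.g.\ the quotient of $\R^3$ by an irrational screw motion); but such examples are not homogeneous, and you do not actually need a translation --- by homogeneity the length of the shortest noncontractible geodesic loop is the same positive constant at every point and is attained by discreteness, so applying $\delta_\lambda$ with $\lambda<1$ already gives the contradiction and forces $\Gamma$ trivial. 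The paper's route is shorter and lives naturally in the metric-space setting the article cares about (it is the same tangent-cone picture that singles out Carnot groups as the non-Riemannian alternatives), while yours gives an explicit curvature-theoretic reason for the rigidity.
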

 Proof [sketch].  See Gromov \cite{Gromov1}, prop. 3.15.  Gromov defines the metric tangent
cone $T_p X$ of any metric space at any point $p \in X$  as the pointed limit $(X, \lambda d)$ as $\lambda \to \infty$.
This limit need not always exist, but it does exist for \Ri manifolds and equals the usual
tangent plane, with its induced Euclidean metric.  If the metric $d$  admits a  dilation  with scale factor  $\lambda$
then $(X,d)$ is isometric to $(X, \lambda d)$.  Letting $\lambda \to \infty$ we see that such an   $X$ is isometric to its metric tangent cone $T_p X$ for all $p$.  QED

Consequently, if we insist on satisfying all three Keplerian symmetries  (1)-(3) while 
also leaving the realm of Euclidean spaces,  we must 
also leave the world of Riemannian manifolds!
The simplest   non-Euclidean  metric space satisfying (1)-(3) is the   Heisenberg group
with its \sR metric.

 \section{Kepler's problem and the Laplacian}
 
 Before formulating the Kepler-Heisenberg problem, we 
 look into how the standard Kepler problem fits within the framework of
 ``natural mechanical systems'' and thus how it generalizes to general \Ri manifolds.
 This background will yield a straightforward way to place the Kepler problem in the Heisenberg context.
 
The Hamiltonian for the  standard Kepler problem on $\R^3$ is
$$H = \fr{1}{2} (p_x ^2 +p_y ^2 + p_z ^2) - \fr{\alpha}{r},$$ where $r = \sqrt{x^2 + y^2  + z^2}$ and $\alpha>0.$
Why the $1/r$ potential?  Perhaps the best answer is that
$U = \fr{1}{4 \pi r}$ is the fundamental solution for  the Laplacian $\Delta$ on (Euclidean!) $\R^3$, 
i.e. the solution to $\Delta U = -\delta_0$.  (See \cite{Albouy} and references therein.)  The choice of sign convention is due to the positivity of the operator $-\Delta$.

The kinetic term in $H$ is the principal symbol of the Laplacian,
so we can write
\begin{equation*}\tag{*}
H (q, p) = \fr{1}{2} \sigma_{\Delta} - \alpha \Delta ^{-1}_q
\end{equation*}
where $\sigma_L$ denotes the principal symbol of $L$, and 
$\Delta^{-1} (q) = K(q,0),$ where $K(x,y) = U(x-y)$ is  the Green's function for the Laplacian
(and where $\alpha = \fr{1}{4 \pi}$).  This reformulation suggests
that we can pose   `Kepler problem' as a Hamiltonian system on any `space' $X$ with a `Laplacian' $\Delta$.

This prescription (*)  for $H$  leaves us with a number of puzzles. 

{\bf Problems.}  What is the cotangent bundle of an arbitrary `space' $X$?
Assuming we make sense of $H$ as a function on the cotangent bundle of
$X$, then what are Hamilton's equations on $T^*X$?   Can we ever compute the  fundamental solution 
$\Delta^{-1}_q$ of our Laplacian?  

All these questions have  answers in the \Ri case. The principal symbol
has the coordinate expression $$\sigma_{\Delta}(p) = \Sigma g^{ij} (q) p_i p_j$$ -- it is the standard
cometric of kinetic energy.  The fundamental solution  of the Laplacian has been  explicitly computed
for hyperbolic $n$-space, so we have a   hyperbolic Kepler problem.   

If $X$ is a compact manifold without boundary,
then the fundamental solution $\Delta_q ^{-1}$
does not exist for topological reasons. For example, we cannot have   a single gravitational  source on the sphere.
There must be an opposing sink elsewhere on the sphere.  
To formulate the Kepler problem on the sphere, one places the sink  antipodally to the source.
See \cite{Diacu1} or \cite{Serret} for a precise formulation.

\section{ Kepler's Problem on the Heisenberg Group!} 

\subsection{Heisenberg Geometry}
 Consider $\R^3$ with standard $x,y,z$ coordinates,   endowed with the two vector fields
$$X = \dd{}{x} -\fr{1}{2} y \dd{}{z} \qquad Y  = \dd{}{y} +\fr{1}{2} x \dd{}{z}.$$
Then $X, Y$ span the canonical contact distribution $D$ on $\R^3$ with induced Lebesgue volume form.  Curves are called horizontal if they are tangent to $D$.
Declaring $X, Y$ \on defines the standard \sR structure on 
the Heisenberg group and yields the Carnot-Carath\'{e}odory metric $ds^2_{\mathbb H}=dx^2+dy^2$.  Geodesics are qualitatively helices: lifts of circles and lines in the $xy$-plane.  The horizontal constraint implies that the $z$-coordinate of a curve grows like the area traced out by the projection of the curve to the $xy$-plane. See Figure \ref{geod} and Chapter 1 of \cite{Tour}.

\begin{figure}
\centering
\includegraphics[width=.3\textwidth]{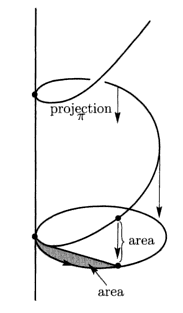}
\caption{A Heisenberg geodesic}
\label{geod}
\end{figure}

The Heisenberg (sub)Laplacian is   
$$\Delta = X^2 + Y^2, $$
a second order subelliptic operator, and the only correct choice for `Laplacian' on the Heisenberg group.  We have
$$[X, Y] =  \dd{}{z}=:Z $$
and $[X, Z] = [Y, Z] = 0$.  There are the commutation relations of the Heisenberg Lie algebra,
hence the name.  The Heisenberg group $\H$ is the simply connected Lie group
with Lie algebra the Heisenberg algebra and is diffeomorphic to  $\R^3$.
In $x,y,z$ coordinates the Heisenberg group law reads
\[(x_1,y_1,z_1)\cdot(x_2,y_2,z_2)=(x_1+x_2, y_1+y_2, z_1+z_2+\tfrac{1}{2}(x_1y_2-x_2y_1)).\]
Left multiplication is an isometry and the vector fields $X,Y$ are left invariant.

\subsection{The Heisenberg Kepler Problem}\label{KepHeis}

Folland (\cite{Folland}) has derived an  explicit  formula for the fundamental solution
for the Heisenberg Laplacian!  It is
$$U : = \Delta_q ^{-1}  =    {\alpha \over \rho^2} , \qquad  \rho = \{ (x^2 + y^2)^2 + \tfrac{1}{16} z^2 \}^{1/4}.$$
Here $\alpha=2/\pi.$
Let  $p_x, p_y, p_z$ be  the dual momenta to $x,y,z$  so  that
together $x,y,z, p_x, p_y, p_z$    form canonical coordinates on $T^* \mathbb H$.   
Then
\[P_X=p_x-\tfrac{1}{2}yp_z, \quad P_Y=p_y+\tfrac{1}{2}xp_z\]
are dual momenta to $X,Y,$
and \[K=\tfrac{1}{2}(P_X^2+P_Y^2)= \tfrac{1}{2} \sigma_{\Delta} \]
is the Heisenberg kinetic energy, given canonically by the cometric. (See Chapter 1 of \cite{Tour}.)
$K$  generates the \sR geodesic flow on the Heisenberg group.
We see that Keplerian dynamics on the Heisenberg group
are the Hamiltonian dynamics for the canonical  Hamiltonian
$$H =K -U.$$ 

There  is {\it no} explicit formula
  for the Heisenberg \sR distance function $||(x,y,z)||_{\mathbb H} :=d_{sr} ((x,y,z), (0,0,0)),$ measuring the distance from a point to the origin .
  So the mix of $K$ and $U$ -- of geodesic and subLaplacian -- is quite interesting 
  and it is rather remarkable that we can write down the Hamiltonian in closed
  form. 
  
  The dilation on the Heisenberg group is
$$\delta_{\lambda} (x,y, z) = (\lambda x, \lambda y, \lambda ^2 z).$$ 
Like the \sR distance, the function $\rho$ is positive homogeneous of degree $1$ with respect to this dilation.
Since the Heisenberg sphere is homeomorphic to the Euclidean sphere,
the standard argument which shows that any two norms on $\R^n$ are Lipshitz equivalent
shows that $\rho$ and $||\cdot||_{\mathbb H}$ are Lipshitz equivalent: there exist positive constants $c, C$
such that $c \rho(x,y,z) < ||(x,y,z)||_{\mathbb H} < C \rho(x,y,z)$ for $(x,y,z) \ne 0$. 

Following the procedure described in Section \ref{thirdlaw}, we find that if a curve $\gamma$ solves Newton's equation $\ddot \gamma(t) =\nabla U(\gamma(t))$, where $\nabla$ denotes the \sR gradient, then so does 
\[ \gamma_{\lambda}(t) := \delta_{\lambda}(\gamma(\lambda^{-2}t)).
\]
Then given a periodic orbit $\gamma$ with period $T$ (see Section \ref{periodic}), we get a family of periodic orbits $\gamma_{\lambda}$ with periods $\lambda^2 T$.  Choosing a suitable notion of the `size' $a$ of a periodic orbit yields the Heisenberg version of Kepler's third law:
\[T^2=Ca^4.
\]

The isometry group of the Heisenberg group  is generated by  translations and rotations.
The translations denote   the   action of the
Heisenberg group on itself by  left multiplication. 
These project to translations of the $xy$-plane.  The rotations form  the circle   group of 
rotations about about the $z$ axis. In addition we have the discrete  
`reflection' $(x,y,z) \mapsto (x, -y, -z)$. 
Translations  act transitively: the Heisenberg group is homogeneous.
Rotations   act transitively on (allowable) directions: the Heisenberg group is isotropic.  
 Thus the Heisenberg group enjoys the three Keplerian symmetry properties.

 \subsection{Hamiltonian Dynamics}\label{HamDyn}
 The dilation on phase space $T^*\mathbb H$ is
 \[ \delta_\lambda \colon (x, y, z, p_x, p_y, p_z) \mapsto  (\lambda x, \lambda y, \lambda^2 z, \lambda^{-1} p_x, \lambda^{-1} p_y, \lambda^{-2} p_z).\]  This is generated by the function $J=xp_x+yp_y+2zp_z$, which satisfies $\dot J=2H$.  When $H=0$, $J$ is a first integral.  Note that $\delta_{\lambda} \colon H \mapsto \lambda^{-2}H.$

 Now change to cylindrical coordinates $(r, \theta, z)$ on $\mathbb H$.  We have the induced conjugate momenta $p_r =(xp_x+yp_y)/r$ and $ p_{\theta}=xp_y-yp_x$.  Our Hamiltonian is
 \[ H=\tfrac{1}{2}p_r^2+\tfrac{1}{2}\Big(\frac{p_{\theta}}{r} +\tfrac{1}{2}rp_z\Big)^2 -\alpha\rho^{-2}. \]
 Note that this does not depend on $\theta$ due to rotational symmetry, and the corresponding angular momentum $p_{\theta}$ is conserved.

On the smooth submanifold of phase space $\{H=0\}$, we have three (independent) conserved quantities $H, p_{\theta}$, and $J$, and a theorem of Arnold (see \cite{Arnold}) says that our system is integrable by quadratures here.  See Figure \ref{orbits} for approximations of orbits which exhibit this integrable behavior as well as the helical Heisenberg geometry.  For this reason, we will mostly focus on the $H=0$ case.  This is especially justified in light of the following.

\begin{lemma}
Periodic orbits must have zero energy.
\end{lemma}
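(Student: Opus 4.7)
The plan is to exploit the dilation-generating function $J = xp_x + yp_y + 2zp_z$ introduced just before the lemma, together with the computation $\dot J = 2H$ already recorded there. This is a virial-type argument: for a homogeneous potential one expects the generator of scaling to grow linearly in time with rate fixed by the energy, so periodic motion forces that rate to be zero.

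Concretely, let $\gamma(t) = (x(t), y(t), z(t), p_x(t), p_y(t), p_z(t))$ be a periodic orbit in $T^*\mathbb{H}$ of period $T > 0$. First I would note that $J$ is a polynomial in the canonical coordinates and momenta, hence a smooth function on $T^*\mathbb{H}$; in particular it is well-defined and finite along $\gamma$. Second, by periodicity in the full phase space we have $\gamma(T) = \gamma(0)$, so
\[
J(\gamma(T)) - J(\gamma(0)) = 0.
\]
Third, the fundamental theorem of calculus and the identity $\dot J = 2H$ (verified once by a direct Poisson-bracket computation using the explicit scaling weights of the coordinates, but already stated in Section \ref{HamDyn}) give
\[
J(\gamma(T)) - J(\gamma(0)) = \int_0^T \dot J\, dt = 2 \int_0^T H(\gamma(t))\, dt.
\]
Finally, because $H$ is a first integral of its own Hamiltonian flow, $H(\gamma(t))$ is constant along $\gamma$, equal to some value $E$. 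The previous two displays combine to give $2ET = 0$, and since $T > 0$ we conclude $E = 0$.

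I do not anticipate a substantive obstacle: the only thing to check carefully is the formula $\dot J = 2H$, which follows from the fact that $J$ is the infinitesimal generator of the one-parameter group of dilations $\delta_\lambda$ together with the scaling law $\delta_\lambda^* H = \lambda^{-2} H$ noted in Section \ref{HamDyn}. Differentiating that scaling relation in $\lambda$ at $\lambda = 1$ yields $\{J, H\} = -2H$ with the appropriate sign convention, equivalently $\dot H = 0$ and $\dot J = 2H$ along the Hamiltonian flow of $H$. Once this is in hand, the argument above is immediate and makes no use of regularity beyond smoothness of $\gamma$ on a single period, so collisions with the singular set $\rho = 0$ need only be excluded for $t \in [0, T]$, which is built into the definition of a periodic orbit.
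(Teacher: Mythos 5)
Your argument is correct and is essentially the paper's own proof: both exploit the dilation generator $J = xp_x + yp_y + 2zp_z$ with $\dot J = 2H$ and conclude from periodicity of $J$ along the orbit that $H=0$; your integral form $J(\gamma(T)) - J(\gamma(0)) = 2ET$ is just a slightly more explicit rendering of the paper's observation that $J$ cannot be monotone on a periodic orbit.
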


 \begin{proof}
 If $\gamma (t)=(x(t), y(t), z(t), p_x(t), p_y(t), p_z(t))$ satisfies $\gamma(0)=\gamma(T)$ for some $t=T$, then $J=xp_x+yp_y+2zp_z$ is also periodic.  But we know the time derivative of $J$ is constant, given by $\dot J=2H$.  Since $J$ cannot be monotonically increasing nor decreasing, we must have $\dot J=2H=0$, so $H=0$.
 \end{proof}

\begin{figure}
\centering
\begin{tabular}{cc}
\includegraphics[width=.5\textwidth]{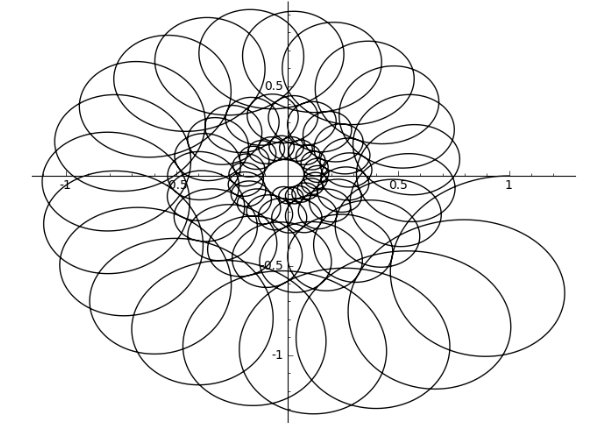}
\includegraphics[width=.5\textwidth]{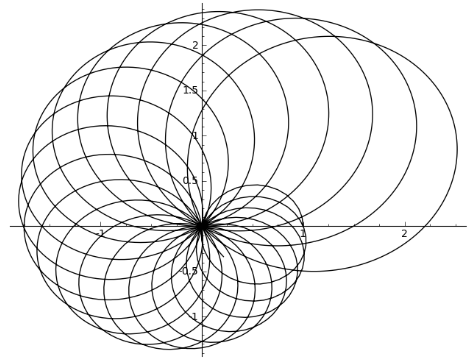}
\end{tabular}
\caption{Projections of zero-energy orbits to the $xy$-plane.}
\label{orbits}
\end{figure}

Periodic orbits exist and the existence proof forms part of C.S.'s thesis -- see Section \ref{periodic} below.  We will momentarily report progress with integration of the $H=0$ system, but first we gather other dynamical results.

\begin{proposition}
If $H<0$ then any solution is bounded.
\end{proposition}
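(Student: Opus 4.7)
The plan is to use conservation of energy together with the positivity of the kinetic term, mimicking the standard Euclidean Kepler argument that negative-energy orbits are confined.

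First, I would observe that along any solution the Hamiltonian $H = K - U$ is constant, with $K = \tfrac{1}{2}(P_X^2 + P_Y^2) \ge 0$ and $U = \alpha/\rho^2 > 0$. Rearranging gives $U = K - H \ge -H$. If $H < 0$, this yields
\[
\frac{\alpha}{\rho(\gamma(t))^2} \;\ge\; -H \;=\; |H| \;>\; 0
\]
for all $t$ on the interval of existence, and hence
\[
\rho(\gamma(t)) \;\le\; \sqrt{\alpha/|H|}.
\]
So the homogeneous gauge $\rho$ is bounded uniformly in $t$ along the trajectory.

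Next, I would invoke the Lipschitz equivalence $c\,\rho(x,y,z) < \|(x,y,z)\|_{\mathbb H} < C\,\rho(x,y,z)$ recalled just before Section~\ref{HamDyn}, which upgrades the $\rho$-bound to a bound on the Carnot--Carath\'eodory distance from the origin. Since Heisenberg balls are compact subsets of $\R^3$ (the topologies agree, cf.\ Chapter~1 of \cite{Tour}), the spatial projection $(x(t), y(t), z(t))$ of the solution is confined to a fixed compact set, which is the assertion that the solution is bounded.

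The only subtlety worth flagging is whether ``bounded'' is meant in configuration space or in phase space. The argument above gives spatial boundedness immediately; however, the momenta $P_X, P_Y$ may still grow as $\gamma$ approaches the singular locus $\rho = 0$, since $K = U + H$ can become arbitrarily large there. Since the problem has a $1/\rho^2$ singularity at the sun, one generally expects solutions to exist only on a maximal interval on which $\rho$ stays away from zero, and on such an interval the momenta are controlled by $K = H + \alpha/\rho^2$. So there is no real obstacle: conservation of $H$ does all the work, and the previously established equivalence of $\rho$ with the subRiemannian norm converts the gauge bound into a genuine bound in the Heisenberg group.
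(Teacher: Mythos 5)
Your argument is correct and is essentially the paper's own proof: conservation of $H$ plus $K\ge 0$ gives $U\ge |H|$, hence $\rho \le \sqrt{\alpha/|H|}$, which bounds the solution in configuration space (the paper stops at the $\rho$-bound; your extra Lipschitz-equivalence step and the remark about momenta near $\rho=0$ are fine but not needed, since the proposition asserts boundedness in configuration space only).
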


\begin{proof}
Suppose $H=-h$ where $h$ is positive.  Then $K-U=-h$, so 
\[ U=K+h\ge h,\]
since $K$ is always non-negative.  Then a solution $(x(t),y(t), z(t))$ in configuration space must satisfy
\[ 0\leq((x^2+y^2)^2+\tfrac{1}{16}z)^{1/2} < \frac{\alpha}{h}, \]
where $\alpha$ and $h$ are positive constants.
\end{proof}

 \begin{proposition}
 The only solutions in the plane $z=0$ are lines through the origin.
 \end{proposition}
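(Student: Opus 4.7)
The plan is to exploit the fact that Hamiltonian trajectories for the Heisenberg cometric automatically project to horizontal curves in $\H$, and that horizontality together with $z \equiv 0$ is extremely restrictive.

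First I would compute, directly from Hamilton's equations applied to $H = K - U$, that $\dot x = \dd{H}{p_x} = P_X$, $\dot y = \dd{H}{p_y} = P_Y$, and
$$\dot z = \dd{H}{p_z} = -\tfrac{1}{2} y P_X + \tfrac{1}{2} x P_Y = \tfrac{1}{2}(x \dot y - y \dot x).$$
In the cylindrical coordinates of Section \ref{HamDyn} this reads $\dot z = \tfrac{1}{2} r^2 \dot \theta$. Hence every solution curve in $\H$ is horizontal: the potential $-U$ contributes only to $\dot p_x, \dot p_y, \dot p_z$ and not to $\dot x, \dot y, \dot z$, so the kinematic constraint built into the cometric is preserved.

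Next I would invoke the hypothesis $z(t)\equiv 0$. Then $\dot z \equiv 0$, so $r^2 \dot\theta \equiv 0$. Since $U = \alpha/\rho^2$ is singular at the origin (indeed on the whole $z$-axis vanishes from the denominator when $r=0$), any maximal solution must have $r > 0$ throughout its interval of existence, so $\dot\theta \equiv 0$ and $\theta(t) = \theta_0$ is constant. The spatial trajectory therefore lies on a single radial ray $\{(r\cos\theta_0, r\sin\theta_0, 0) : r > 0\}$, which is contained in a line through the origin in $\R^3 \cong \H$.

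There is no real obstacle here; the entire content of the proposition is the one-line observation that a horizontal curve confined to $z=0$ must be radial. If one wants to verify that such radial motions actually occur as solutions (the converse direction, not formally asserted by the proposition), I would check that the submanifold $\{z=0,\, p_z=0,\, p_\theta = 0\}$ is invariant under the flow: on $z=0$ one has $\dd{U}{z} = 0$, so $\dot p_z = 0$; and $p_\theta$ is conserved by rotational symmetry. The reduced dynamics on this invariant set are governed by the one-dimensional Hamiltonian $H = \tfrac{1}{2} p_r^2 - \alpha/r^2$, whose trajectories trace out the radial lines.
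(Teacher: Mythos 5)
Your argument is correct and is essentially the paper's own proof: both rest on the horizontality relation $\dot z = \tfrac{1}{2}r^2\dot\theta$, from which $z\equiv 0$ forces $r=0$ or $\dot\theta=0$, hence a radial line through the origin; the paper merely adds an explicit parametrization $\gamma(t)=(c_1t^{1/2}, c_2t^{1/2}, 0, \tfrac{1}{2}c_1t^{-1/2}, \tfrac{1}{2}c_2t^{-1/2}, 0)$ showing such solutions exist with $H=0$, which plays the role of your invariant-submanifold check. One small correction: $\rho$ vanishes only at the origin, not on the whole $z$-axis, though this is harmless since within $z=0$ the locus $r=0$ is exactly the origin.
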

 
 \begin{proof}
 The equations for $\dot z$ and $\dot \theta$ satisfy the relation
 \[\dot z= \tfrac{1}{2}r^2\dot \theta. \]
 For a path lying in the plane $z=0$, this implies either $r=0$ or $\dot \theta=0$.  In the first case, the path is trivial.  In the second, it lies on a line through the origin. Such a curve may be parametrized  by
 \[ \gamma(t)=(c_1t^{1/2}, c_2t^{1/2}, 0, \tfrac{1}{2}c_1t^{-1/2}, \tfrac{1}{2}c_2t^{-1/2}, 0).\]
 It is easy to verify that the desired equations are satisfied, and that $H=0$.
 \end{proof}
 
 \begin{proposition}
 The only solutions constant in configuration space are
 \[ \gamma(t)=(0,0,k,0,0,-\tfrac{4\alpha}{k^2}t). \]
 \end{proposition}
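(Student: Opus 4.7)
The plan is to exploit Hamilton's equations in the canonical coordinates $(x,y,z,p_x,p_y,p_z)$ on $T^*\mathbb{H}$ from Section~\ref{HamDyn}, reading off what ``constant in configuration'' forces on the momenta. I would proceed in three steps.

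\textbf{Step 1 (constancy of $x,y$ kills the kinetic energy).} From $\dot x=\partial H/\partial p_x=P_X$ and $\dot y=\partial H/\partial p_y=P_Y$, if $x$ and $y$ are constant in time then $P_X\equiv 0$ and $P_Y\equiv 0$. This makes $K\equiv 0$ and automatically gives $\dot z=-\tfrac{1}{2}yP_X+\tfrac{1}{2}xP_Y\equiv 0$, so $z$ is constant as well; call its value $k$.

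\textbf{Step 2 (the configuration must sit on the $z$-axis).} Since $P_X$ and $P_Y$ must remain zero, $\dot P_X=\dot P_Y=0$. Computing these derivatives through $\dot p_x=-\partial H/\partial x$, $\dot p_y=-\partial H/\partial y$, $\dot p_z=-\partial H/\partial z$, the kinetic contributions drop out (they are proportional to $P_X$ or $P_Y$) and only the potential $U=\alpha/\rho^2$ contributes. Using
$$\frac{\partial\rho}{\partial x}=\frac{x(x^2+y^2)}{\rho^3},\quad \frac{\partial\rho}{\partial y}=\frac{y(x^2+y^2)}{\rho^3},\quad \frac{\partial\rho}{\partial z}=\frac{z}{32\rho^3},$$
the two conditions $\dot P_X=\dot P_Y=0$ simplify, after clearing a common factor of $\alpha/\rho^6$, to the algebraic pair
$$64x(x^2+y^2)=yz,\qquad 64y(x^2+y^2)=-xz.$$
Multiplying the first by $y$ and the second by $x$ and subtracting gives $(x^2+y^2)z=0$. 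Since $\rho=0$ is excluded (the potential is singular there), one cannot have $(x,y,z)=(0,0,0)$; the only remaining possibility is $x=y=0$ with $z=k\neq 0$.

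\textbf{Step 3 (determining the momenta).} On the $z$-axis, $P_X=0$ reduces to $p_x=0$ and $P_Y=0$ to $p_y=0$; and these values are preserved in time since $\dot p_x=-\partial H/\partial x$ and $\dot p_y=-\partial H/\partial y$ both vanish when $x=y=0$ and $P_X=P_Y=0$. Along $x=y=0$, $z=k$ we have $\rho^4=k^2/16$, hence $\rho^6=|k|^3/64$, and
$$\dot p_z=-\frac{\partial H}{\partial z}=-\frac{\alpha z}{16\rho^6}=-\frac{4\alpha}{k^2}$$
(up to a sign absorbed into the choice of $k>0$). Integrating, and using the obvious freedom $t\mapsto t+c$ to kill the constant of integration, yields $p_z(t)=-\tfrac{4\alpha}{k^2}t$, reproducing the stated formula.

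The calculation is essentially algebraic; the only substantive step is Step~2, where the coupled relations derived from $\dot P_X=\dot P_Y=0$ combine to force the configuration onto the singular axis of the $\rho$-level sets. Everything else is bookkeeping in Hamilton's equations.
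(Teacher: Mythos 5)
Your proof is correct and is precisely the ``easy calculation'' that the paper leaves to the reader: a direct computation with Hamilton's equations in the coordinates of Section \ref{HamDyn}, so the approach is essentially the same. One tiny point worth making explicit in Step 2: the identity $(x^2+y^2)z=0$ also allows the branch $z=0$ with $(x,y)\neq(0,0)$, but substituting $z=0$ back into the pair $64x(x^2+y^2)=yz$, $64y(x^2+y^2)=-xz$ forces $x=y=0$ anyway, so your conclusion stands (and your parenthetical sign remark for $k<0$ matches the implicit convention in the statement).
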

 
 \begin{proof}
 This is an easy calculation.  Note that such solutions are unbounded in phase space, and satisfy $H<0$.
 \end{proof}

 Next, we explicitly integrate the equations of motion on a codimension 3 submanifold, and recover conics reminiscent of the Euclidean Kepler problem.  Consider the smooth submanifold $N=\{z=p_z=p_{\theta}=0\}$.  This submanifold is invariant under the dynamics, since $\dot z= \dot p_z= \dot p_{\theta}=0$ on $N$.  The Hamiltonian is
\begin{align*}
 H|_N
&= \tfrac{1}{2}p_r^2-\frac{\alpha}{r^2},
\end{align*}
which has the form of a classical central force problem in the plane.  Fix an energy level $H|_N=h$.  Then since $p_r=\dot r$, we can explicitly solve for $r(t)$ as follows.  

\begin{proposition}
On $N$, $r(t)$ traces out a hyperbola if $h>0$, an ellipse if $h<0$, and a parabola if $h=0$.
\end{proposition}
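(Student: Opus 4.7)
The plan is to exploit the fact that on $N$ the dynamics reduce to a one-dimensional problem in $r$, and then to linearize the resulting ODE by the substitution $u = r^2$.

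First I would note that, since $N$ is invariant under the flow, the restricted Hamiltonian
\[ H|_N = \tfrac{1}{2}p_r^2 - \frac{\alpha}{r^2} \]
together with the canonical relation $p_r = \dot r$ gives a complete description of the motion on $N$. Fixing the energy $H|_N = h$ yields the first-order conservation law
\[ \tfrac{1}{2}\dot r^2 = h + \frac{\alpha}{r^2}, \]
equivalently
\[ r^2 \dot r^2 = 2hr^2 + 2\alpha. \]

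Second, I would introduce $u(t) = r(t)^2$. Then $\dot u = 2r\dot r$, so $\dot u^2 = 4(2hu + 2\alpha) = 8hu + 8\alpha$. Differentiating this identity in $t$ and dividing by $2\dot u$ (away from isolated critical points of $u$) gives the linear second-order equation
\[ \ddot u = 4h. \]
Therefore $u(t) = 2ht^2 + bt + c$ for constants $b,c$ determined by initial data, i.e.
\[ r(t)^2 = 2h\,t^2 + b\,t + c. \]

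Third, I would interpret this as a conic in the $(t,r)$-plane. Rewriting as $r^2 - 2h\,t^2 = bt + c$ and completing the square in $t$, the curve $\{(t, r(t))\}$ is a level set of the quadratic form $r^2 - 2h(t - t_0)^2$. When $h > 0$ the form is indefinite, producing a hyperbola; when $h < 0$ the form is positive definite, producing an ellipse; when $h = 0$ the equation collapses to $r^2 = bt + c$, a parabola. The constants $b,c$ can be matched to the initial values of $r$ and $p_r$, with consistency imposed by $\dot u(0)^2 = 8h\,u(0) + 8\alpha$, i.e. $b^2 = 8hc + 8\alpha$.

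The analysis is essentially a routine calculation; the only point requiring care is the substitution $u = r^2$ that converts the singular potential $-\alpha/r^2$ into a constant-force problem. The ``main obstacle,'' such as it is, is simply to recognize that the quadratic dependence of $r^2$ on $t$ is exactly the statement that the orbit traces a conic section in the time-radius plane, thereby justifying the conic terminology inherited from the Euclidean Kepler problem.
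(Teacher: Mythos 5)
Your argument is correct, and it reaches the same conic $r^2-2h(t-t_0)^2=-\alpha/h$ (and $r^2=\sqrt{8\alpha}\,t$ when $h=0$) as the paper, but by a different route. The paper separates variables in the energy relation $\tfrac{1}{2}\dot r^2=\alpha/r^2+h$ and integrates directly, obtaining $t=\tfrac{1}{h\sqrt 2}\sqrt{\alpha+r^2h}$ for $h\neq 0$; squaring gives the conic at once, with the constant of integration implicitly set to zero (i.e.\ the time origin chosen at the turning point or at $r=0$). You instead linearize by setting $u=r^2$, deduce the constant-force equation $\ddot u=4h$, solve $u(t)=2ht^2+bt+c$ with general constants, and only then impose the energy constraint $b^2=8hc+8\alpha$ and complete the square. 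What your version buys is an explicit bookkeeping of the time-translation freedom and no manipulation of square roots or of the sign of $\dot r$; what the paper's quadrature buys is brevity. One small point to tidy in your write-up: rather than differentiating $\dot u^2=8hu+8\alpha$ and dividing by $\dot u$, you can get $\ddot u=4h$ directly from $\ddot u=2\dot r^2+2r\ddot r$ together with $\ddot r=\dot p_r=-2\alpha/r^3$ and the energy relation, which sidesteps any worry about zeros of $\dot u$ (in fact $\dot u$ cannot vanish on an interval here, so your continuity caveat also suffices).
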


\begin{proof}
The Hamiltonian may be rewritten as the simple ODE
\[\tfrac{1}{2} \Big(\frac{dr}{dt}\Big)^2=\frac{\alpha}{r^2}+h.\]
Assume temporarily that $h \neq 0$.  Integrating, we find
\begin{align*}
t&= \int dt 
=\tfrac{1}{\sqrt 2}\int \frac{r}{\sqrt{\alpha+r^2h}}dr
=\tfrac{1}{h\sqrt 2}\sqrt{\alpha+r^2h},
\end{align*}
which may be rewritten
$r^2-2ht^2=-\tfrac{\alpha}{h}.$
Since $\alpha>0$, this curve in the $t,r$-plane is an ellipse for $h<0$ and a hyperbola for $h>0$.

If $h=0$, we find that 
\[t=\tfrac{1}{\sqrt{2\alpha}}\int rdr= \tfrac{1}{2\sqrt{2\alpha}}r^2, \]
and thus
$r^2=\sqrt{8\alpha} \ t.$
\end{proof}

We conclude this section with the following conjecture:
 \begin{conjecture}
 There is an open set of initial conditions whose orbits are asymptotic to helices.
 \end{conjecture}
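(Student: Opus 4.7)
The plan is to mimic the standard scattering argument for hyperbolic orbits in the Euclidean Kepler problem: identify an open set on which the orbit escapes to infinity, and then compare the escaping orbit with a geodesic of the kinetic Hamiltonian $K$ (a helix), using the decay of the potential along the trajectory.

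First I would produce the set of escaping orbits. The natural choice is an open subset of the positive energy region $\{H>0\}$ on which the dilation generator $J = xp_x + yp_y + 2zp_z$ is positive and not too small. Since $\dot J = 2H$, on a level set $\{H=h\}$ with $h>0$ one has $J(t) = J(0) + 2ht$, growing linearly in $t$. Away from a fixed neighborhood of the sun, conservation $K = h + U$ together with the boundedness of $U$ there bounds the horizontal momenta $P_X, P_Y$, so $|J|$ is bounded by a constant multiple of $\rho$. The linear growth of $J$ then forces $\rho(\gamma(t)) \to \infty$ for orbits that eventually stay out of such a neighborhood of the sun, and an explicit outgoing condition imposed at $t=0$ (e.g.\ $J(0)$ large and $\rho(\gamma(0))$ large compared to $h^{-1/2}$) guarantees this for a full open set of initial conditions.

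Next I would carry out the comparison with a geodesic. Since $U$ is positively homogeneous of degree $-2$ under the Heisenberg dilation, $\nabla U$ is homogeneous of degree $-3$. Once the escape estimate $\rho(\gamma(t)) \geq c\,t$ is in hand, this gives $|\nabla U(\gamma(t))| = O(t^{-3})$, which is integrable on $[T,\infty)$. Writing Hamilton's equations for $H = K - U$ as a perturbation of Hamilton's equations for $K$, a Duhamel/contraction argument on $[T,\infty)$ produces a unique geodesic $\tilde\gamma$ (a helix, by the description in Section~\ref{KepHeis}) whose horizontal coordinates and momenta are asymptotic to those of $\gamma$. Openness of the conclusion then follows from continuous dependence of the asymptotic data on the initial data in this contraction argument, together with openness of $\{H>0\}$ and of the outgoing condition.

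The main obstacle will be controlling the $z$-coordinate. Because the Heisenberg metric is sub-Riemannian, $z$ along a horizontal curve is determined by its $xy$-projection through $\dot z = \tfrac12(x\dot y - y\dot x)$, so the $z$-drift of the perturbed orbit records an improper area integral that is sensitive to small corrections accumulated over $[T,\infty)$. Showing that this area integral converges -- so that $\gamma$ shadows a single helix and not a continuous family of helices drifting relative to one another in $z$ -- is where the degree $-2$ homogeneity of $U$ (stronger than the classical degree $-1$) is essential, and where I expect the bulk of the technical work to lie. A related subtlety is that the escape step must rule out near-collisions on which $\rho(t)$ dips back and the bound on $|\nabla U|$ deteriorates; this can likely be handled by restricting to the open subset $\{p_\theta \neq 0\}$ and using the conserved angular momentum as a centrifugal barrier.
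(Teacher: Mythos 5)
The statement you are proving is not proved in the paper at all: it is stated as a conjecture, supported only by numerical experiment and the heuristic that $U$ and its derivatives tend to zero at infinity. So your proposal has to stand on its own, and as written it has genuine gaps beyond the one you flag. The central problem is the escape estimate. You claim $|J|\le C\rho$ away from the sun because $K=h+U$ bounds the momenta, but $J=xP_X+yP_Y+2zp_z$, and neither $K$ nor $\rho$ controls the term $2zp_z$ linearly: $p_z$ does not appear in the kinetic energy at all, and $|z|$ is only bounded by a multiple of $\rho^2$ (since $\rho^4\ge \tfrac{1}{16}z^2$). So linear growth of $J$ does not force $\rho\gtrsim t$. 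Worse, the bound $\rho(\gamma(t))\ge ct$ is false precisely for the orbits the conjecture is about: a genuine helix (a geodesic with $p_z\neq 0$) has bounded $xy$-projection and $z$ growing linearly, so along it $\rho\sim t^{1/2}$, not $t$. Note also that the paper's numerical evidence for helical behavior (Figure \ref{orbits}) is on $\{H=0\}$, where $\dot J=2H$ gives you nothing; an open set inside $\{H>0\}$ is admissible for the conjecture as stated, but your mechanism for escape there does not close.

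With the corrected growth $\rho\sim t^{1/2}$, the horizontal force is $O(t^{-3/2})$ and $\dot p_z=O(t^{-2})$, so the momenta do converge; but $U$ along the orbit decays only like $1/t$ and $p_z(t)-p_z^{\infty}=O(1/t)$. Since $p_z$ is exactly the angular frequency of the comparison helix, the accumulated phase error $\int^t\bigl(p_z(s)-p_z^{\infty}\bigr)\,ds$ diverges logarithmically. This is the long-range (Coulomb-type) scattering obstruction: a naive Duhamel/contraction comparison with the unperturbed geodesic flow on $[T,\infty)$ will not produce a single limiting helix without Dollard-type modified comparison dynamics or a weakened notion of ``asymptotic to a helix,'' and this is in addition to the $z$-area drift you correctly identify. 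Finally, the appeal to $p_\theta\neq 0$ as a centrifugal barrier is delicate here because the potential is homogeneous of degree $-2$: near a collision the terms $p_\theta^2/2r^2$ and $\alpha/\rho^2$ are of the same order, so excluding near-collisions requires a quantitative condition of the type $p_\theta^2>2\alpha$, not merely $p_\theta\neq 0$. The overall scattering strategy is sensible and consistent with the paper's heuristic, but the escape estimate, the long-range phase correction, and the collision exclusion are each unresolved, and the first two fail as you have stated them.
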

 
 This behavior is suggested by numerical experiment and by the fact that $U$ and its derivatives tend to zero as orbits tend towards $\infty$. 

 \subsection{Integration of the case $H=0$}
 We now focus on the $H=0$ case and reduce the integrability of the equations of motion to the parametrization of a family of degree 6 algebraic plane curves.
 
Let $\tilde H=\frac{K}{U}$.  Then integral curves for $\tilde H$ are the same as geodesics for the metric $Uds^2_{\mathbb H}$.
 When $H=0$, this is the same as the metric $(H+U)ds^2_{\mathbb H}$, whose geodesics correspond to integral curves for $H$, according to the Jacobi-Maupertuis principle.
 Thus, the flow of $H$ is the same as the flow of $\tilde H$ up to reparametrization on the hypersurface $\{H=0\}=\{\tilde H=1\}.$
 
A short calculation shows that both $J$ and $p_{\theta}$ Poisson commute with $\tilde H$. 
(Recall $\{H, J\}=2H$.)
 This demonstrates the scale invariance of $\tilde H$;
 $\delta_\lambda \colon \tilde H \mapsto \tilde H.$
 More importantly, we have three independent quantities conserved by the flow of $\tilde H$.  Thus, we have an integrable system on $\{H=0\}=\{\tilde H=1\}.$
 
 Change our third coordinate $z \mapsto v=z/r^2$.  We have conjugate momenta $\tilde p_r,  p_{\theta}, p_v$.  In these coordinates, we have $J=r\tilde p_r$ (as in the Euclidean case) and 
 $\tilde H=\tilde H(p_{\theta}, J, v, p_v).$
 On the submanifold $\{H=0\}$, we have $\tilde H=1$.  Also, the initial conditions determine the constants $J$ and $p_{\theta}$.  Thus, given initial conditions, $\tilde H$ is a function of $v$ and $p_v$ only.  We arrive at the following result.
 \begin{proposition}
 When $\tilde H=1$, any solution must project to an algebraic curve in the $v, p_v$-plane.
 \end{proposition}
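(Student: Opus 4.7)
\medskip
\noindent\textbf{Proof proposal.} The plan is to write $\tilde H$ explicitly in the new canonical coordinates $(r,\theta,v,\tilde p_r,p_\theta,p_v)$, show it is an \emph{algebraic} function of $(v,p_v)$ once the conserved quantities $J$ and $p_\theta$ are fixed, and then clear the one remaining radical from the relation $\tilde H=1$ to exhibit the polynomial equation defining the projected curve.

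First I would implement the symplectic change of coordinates $z\mapsto v=z/r^2$. Writing $z=vr^2$ and equating the Liouville one-forms $p_r\,dr+p_z\,dz=\tilde p_r\,dr+p_v\,dv$ gives $p_z=p_v/r^2$ and $p_r=\tilde p_r-2vp_v/r$, together with the identity $J=xp_x+yp_y+2zp_z=r\tilde p_r$ already recorded in the text. Substituting into the Hamiltonian, the kinetic energy collapses to
\[
K=\frac{1}{2r^2}\Big[(J-2vp_v)^2+\tfrac{1}{4}(2p_\theta+p_v)^2\Big],
\]
while the potential becomes
\[
U=\frac{\alpha}{\rho^2}=\frac{\alpha}{r^2\sqrt{1+v^2/16}},
\]
since $\rho^4=r^4+z^2/16=r^4(1+v^2/16)$. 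The prefactors $1/r^2$ cancel in the ratio, confirming that
\[
\tilde H=\frac{K}{U}=\frac{\sqrt{1+v^2/16}}{2\alpha}\Big[(J-2vp_v)^2+\tfrac{1}{4}(2p_\theta+p_v)^2\Big]
\]
depends only on $v,p_v$ once the constants $J$ and $p_\theta$ are fixed. This is the key structural fact; it rests entirely on the scale-invariance already used to show that $J$ and $p_\theta$ Poisson-commute with $\tilde H$.

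Next I would impose the two conservation laws along a solution (fix $J=J_0$, $p_\theta=\mu$) and the constraint $\tilde H=1$. The resulting equation involves the radical $\sqrt{1+v^2/16}$, but squaring produces
\[
(1+\tfrac{1}{16}v^2)\Big[(J_0-2vp_v)^2+\tfrac{1}{4}(2\mu+p_v)^2\Big]^2=4\alpha^2,
\]
which is manifestly a polynomial equation in $(v,p_v)$ with coefficients depending on the constants $J_0,\mu,\alpha$. This is the algebraic curve in the $(v,p_v)$-plane to which the solution projects, so the proposition follows. One also records that $\tilde H\ge 0$ selects the correct branch of the squared equation, so no spurious component is introduced.

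The main obstacle here is purely computational: one must perform the canonical change of variable correctly and verify that the $r$-dependence really does cancel. Once the formula for $\tilde H$ above is in hand, algebraicity is automatic. The only remaining care is in clearing the square root (done by a single squaring, since the radicand is positive) and in checking that the degree of the resulting polynomial matches the degree 6 curves advertised just before the statement; a glance at the leading $v$-power shows that, after suitable normalization, the equation is of degree 6 in $v$, with the parameters $J_0$ and $\mu$ controlling the shape of the curve.
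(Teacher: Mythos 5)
Your proposal is correct and follows essentially the same route as the paper: the paper's argument is precisely that in the coordinates $(r,\theta,v)$ one has $J=r\tilde p_r$ and $\tilde H=\tilde H(p_\theta,J,v,p_v)$, so with $J$ and $p_\theta$ fixed by the initial conditions the relation $\tilde H=1$ is a curve in the $(v,p_v)$-plane; you simply carry out the canonical substitution and the squaring explicitly. One small caveat: your squared equation has total degree $10$ (degree $6$ in $v$ alone), which is consistent with the paper's remark that the curves are ``naturally degree 10'' and only become degree 6 after a further change of variables, so your closing degree count should be read as degree in $v$, not total degree.
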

 
These curves are naturally degree 10 but can be reduced to degree 6 by changing variables.  Examples are shown in Figure ~\ref{v_curves}. If we can parametrize these curves, we should be able to bootstrap up to find explicit solutions.
 \begin{figure}
 \centering
 
 \begin{tabular}{cc}
 \includegraphics[width=.5\textwidth]{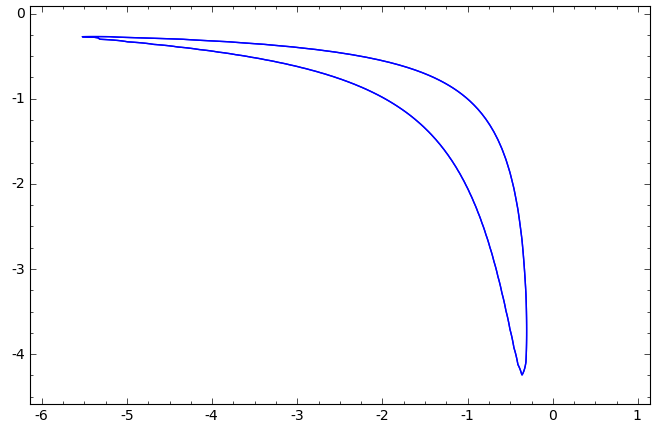}
 \includegraphics[width=.5\textwidth]{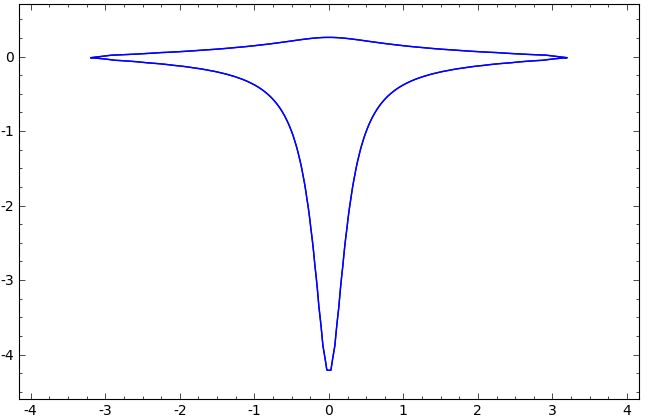}
 \end{tabular}
 \caption{Curves in the $v, p_v$-plane corresponding to $J=3,\ p_{\theta}=1$ (left) and $J=0,\ p_{\theta}=1$ (right).}
 \label{v_curves}
 \end{figure}

 \subsection{Periodic Orbits}\label{periodic}

Despite the fact that the $H=0$ case is integrable, we have not been able to explicitly solve the equations.  However, we know that periodic orbits exist.
 
Take $L=K+U$ as our Lagrangian and impose the horizontal constraint $\dot z=\tfrac{1}{2}x\dot y - \tfrac{1}{2}y \dot x$.
 Then any trajectory $\gamma$ must lie on the zero set of the function  
 \[G=\tfrac{1}{2}x\dot y - \tfrac{1}{2}y \dot x-\dot z.\]
 The calculus of variations tells us that if $\gamma \colon [0,T] \to \mathbb H$ is a minima of the action functional $\int_0^T  Ldt$ which also satisfies our constraint, then there exists a scalar $\lambda=\lambda(t)$ such that $\gamma$ is a minima of the modified action functional 
 \[ A(\gamma) = \int_0^T  L_{\lambda}(t, \gamma, \dot \gamma) dt,\]
 where we have written $L_{\lambda}(t, \gamma, \dot \gamma)= L(t, \gamma, \dot \gamma)-\lambda(t)G(t, \gamma, \dot \gamma)$.
 Setting the first variation of $A$ equal to zero and integrating by parts yields the Euler-Lagrange equations:
 \begin{align*}
 \ddot x&=-\lambda\dot y - \tfrac{1}{2} \dot \lambda y - 2\alpha x(x^2+y^2)\rho^{-6} \\
 \ddot y&=\lambda\dot x + \tfrac{1}{2} \dot \lambda x - 2\alpha y(x^2+y^2)\rho^{-6} \\
 \dot \lambda &= -\tfrac{\alpha}{16}z\rho^{-6}.
 \end{align*}
 When $\lambda=p_z$ we find that these agree with Hamilton's equations.
 
 We are confident that the direct method in the calculus of variations applied to $A(\gamma)$ will yield a proof of the existence of periodic orbits. One works in the Hilbert space $H^1(S^1, \mathbb H)$ and requires that admissible curves are horizontal and satisfy the symmetry conditions
  \begin{equation*} \tag{S1}
  \gamma(t+ T/3)=R_{2\pi/3}  \gamma(t)
 \end{equation*}
 and
  \begin{equation*} \tag{S2}
  z(t+ T/2)=-z(t),
 \end{equation*}
  where \[R_{2\pi/3}=\begin{bmatrix} -\frac{1}{2} & -\frac{\sqrt 3}{2} & 0\\ \frac{\sqrt 3}{2} & - \frac{1}{2} & 0\\0 & 0 & 1 \end{bmatrix}.  \]
 Any admissible curve is therefore necessarily periodic, with additional symmetry.  A suggestive approximation of such a curve is shown in Figure \ref{orbit1}.
 
 The idea is to choose a minimizing sequence $\gamma_n$ of curves in this space, and show that they converge within the space to some $\gamma_*$.  Applying elementary analysis and the principle of symmetric criticality shows that $\gamma_*$ must minimize the action, thereby satisfying the Euler-Lagrange equations.  A central difficulty lies in proving that $\gamma_*$ does not pass through the singularity at the origin.  A full existence proof is expected to appear in the thesis of C.S.

\begin{figure}
 \centering
 \begin{tabular}{cc}
\includegraphics[width=.6\textwidth]{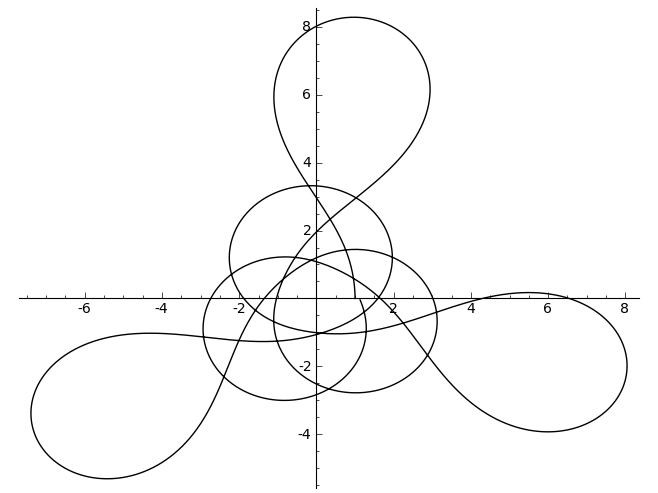}
\raisebox{3.4mm}{
\includegraphics[width=.4\textwidth]{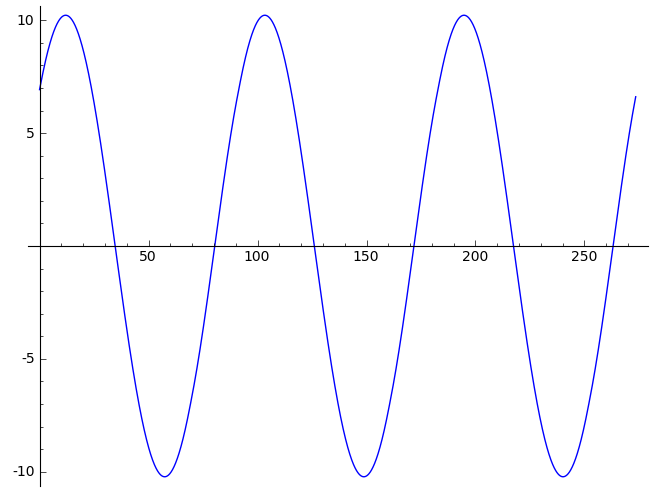}}
 \end{tabular}
 \caption{Projection of an orbit to the $xy$-plane (left) and $z$-coordinate over time (right)}
 \label{orbit1}
 \end{figure}

 \subsection{A failure of reduction.}

\vskip .2cm

Newton reduced his   two-body problem
  to the  Kepler problem in Euclidean space.
There is  no  analogous reduction
for the two-body problem on the Heisenberg
group, nor is there for the two-body problem on
the sphere or in hyperbolic space.
 We  discuss the geometric roots of this failure.

We  begin by  writing  down
the Heisenberg two-body problem.  Let   $q_1 , q_2 \in \H \cong \R^3$ denote the
positions
of   two bodies
moving in the Heisenberg group $\H$. Let their  masses be  $m_1, m_2$.
Their individual kinetic energies are
$$K_i = \fr{1}{2 m_i} ((P^{(i)}_X)^2 +  (P^{(i)}_Y )^2)$$
where   $P^{i}_X, P^i _Y$ are the horizontal momenta
of each body, as in Section \ref{KepHeis}.
The Heisenberg two-body problem is defined by the
Hamiltonian
$$H = K_1 + K_2 -  \kappa m_1 m_2 U(q_1 ^{-1} q_2),$$
where $\kappa$ is the Gravitational constant
and $U$ is Folland's fundamental solution.
  $H$  is a Hamiltonian on the cotangent bundle of $\H \times \H$,
and is invariant under the (cotangent lift of the  left) translation $(q_1, q_1) \mapsto (g q_1, g q_2)$,
$g \in \H$.

We know of two  derivations of   Kepler's   problem (on Euclidean space)
from Newton's two-body problem.  We will call these the
`algebraic' and the `group-theoretic' derivations. The  `algebraic derivation'  begins with
the equation  $F=m a $
for each body.  Divide the  equation for each body  by its mass   to get
equation for the acceleration  $\ddot q_i$ of each body's position vector $q_i$.  Subtract
one   equation from the other to obtain the    ODE of   Kepler's problem,   $\ddot q = - \alpha q /|q|^3,$
for  the difference
vector   $q= q_1 - q_2$.
The `group theoretic  derivation' depends  on the conservation of the
  total  linear momentum,   the   invariance of  Newton's   mechanics
with respect to    Galilean boosts, and the abelian nature of the translation group.
If $P$ is the total linear momentum and $M$ the total mass, we
boost by the velocity  $-P/M$ to get to a new representation of the same dynamics
in which the total linear momentum is zero.   Then we reduce by translation  at the value  $0$
by placing the center of mass at the origin.   Finally, we compute that each mass separately
satisfies Kepler's equation with the origin -- the center of mass --  now playing the role of   ``sun''.

The algebraic derivation fails on the Heisenberg group because the `difference vector' $g_1 (t)^{-1} g_2 (t)$ of two  Heisenberg
geodesics is not a Heisenberg geodesic.   Why is this lack of being a geodesic a problem?
Set  the Heisenberg Gravitational constant $\kappa = 0$
so the two-body problem reduces to two uncoupled Heisenberg geodesic problems.
Play the algebraic game. Our  `difference vector'  does not satisfy the Heisenberg geodesic equations
or any other pretty Hamiltonian equation.  But in the Newtonian-Euclidean case, the difference vector
travels like a free particle, i.e., moves in a straight line -- as it should with $\alpha = 0$ in Kepler's problem.
Things will just get worse for $\kappa \ne 0$.

The failure of the group theoretic derivation goes a bit deeper and is  perhaps more enlightening.
What is a `Galilean boost' for  an arbitrary Lie group? We choose some `translation velocity'
$\xi$ and multiply elements  $x_0$  by $exp(t \xi)$.  Euclidean space enjoys the wonderful
property that $exp(t \xi) x_0 = x_0 + t \xi$ describes free motion; it is a geodesic.
This assertion is decidedly false for  the Heisenberg group: the orbits of
(left or right) translates of one parameter subgroups are not Heisenberg geodesics.
As a result,
applying a boost to a solution $(q_1 (t), q_2 (t))$ to
the Heisenberg two-body problem  will not yield a solution.  There is
a conserved  total `linear momentum':   the momentum map for the
(left) translation action.   But we cannot use it to  `Galilean boost' the `center of mass velocity'
 down to zero.
Even if this total linear  momentum  were initially zero,
we still seem to be stuck.   The non-Abelian nature of the group appears
to block us from writing the reduced Hamiltonian at zero as a Kepler Hamiltonian
on the `diagonal group' of elements  $q = q_1 ^{-1} q_2$.

In spherical and hyperbolic geometry, reduction of the two-body problem to
the Kepler problem fails for similar reasons. See \cite{Diacu2}.
In the spherical case, Shchepetilov \cite{Shchepetilov}  used the Morales-Ramis theory to  prove that
the two-body  problem in these two geometries is not meromorphically integrable.

{\bf Question.}  Is  the two-body problem on the Heisenberg group non-integrable?

   \section{Kepler's Problem on a Lattice.}  
   
   Lattices admit one-sided dilations: we can scale a lattice $\L$ by a  positive integer $c$ 
   and land back in the lattice, stretching all distances by $c$.
   They   admit Laplacians. So we  
   might be able to  begin to investigate    Kepler's 3rd law on $\L$.   
   
   What are Newton's equations on $\L$?  Since we must hop from lattice site to lattice site,
   we must choose our  time variable  $t$ to be discrete: 
   $$t = \ldots,  -1, 0, 1, 2, \ldots. $$
   A `solution' to Newton's equations will  then be a `discrete curve'
   $$\gamma: \Z \to {\mathbb L,}   \qquad \L \text{ our lattice,} $$
   satisfying a difference equation which mimics Newton's equations.  In
   1st order Hamiltonian form these equations should resemble
   $$\fr{d \gamma}{d [t]} = p$$
   $$\fr{d p}{d [t]}  = - \nabla V (\gamma (t)),$$
   where the differential is the discrete difference operator
   $$\fr{d \gamma}{d [t]} = \gamma (t+1) - \gamma(t),$$
   and where 
   $V: \L \to \R$ is our potential.
   The standard interpretation of $\nabla V$ is in terms of its differential
  $$dV(\ell):  \E_{\ell} \to \R, \quad \ell \in \L,$$
   where $\E = \E_{\ell}$ is the set of edges (chosen lattice generatos)
    leaving the lattice site $\ell,$
   and where
   $$dV(\ell) (e) = V(\ell^{\prime}) - V(\ell), \quad   e = [\ell, \ell^{\prime}] \text{ an edge}.$$
   Then we can rewrite our Newton difference equations as
  \begin{eqnarray}
\gamma(t+1) = \gamma(t) + p(t) \\
 p(t+1) = p(t) - dV(\gamma(t)).
\end{eqnarray}
   What is  the momentum, $p(t)$? We add to it   $dV(\gamma(t))$,
   so  it must lie  in the
   same space as $dV,$ which is
   $$\E^* = \E_{\ell} ^* = \text{ real valued functions on } \E_{\ell}.$$
    This `cotangent space at $\ell$' is 
   a vector space isomorphic to $\R^d,$ where $d$ is the degree of a vertex: the number of edges leaving
   $\ell$.   
Good.  Now, how do we add   $p(t) \in \E^*$ to a lattice site $\ell = \gamma(t) \in \L$
   in order to get a new lattice site $\gamma(t+1)$ as in the 1st Newton equation?  
    {\it We seem to be missing the `mass matrix' or `cometric' of mechanics.}
    
   \begin{definition}
   A lattice cometric is a   `non-trivial'  map
   $$\M :  \E^* \to \L.$$ 
   \end{definition}
   With this tentative definition we can now try to write down `Newton's equations'
     \begin{eqnarray}
\gamma(t+1) = \gamma(t) + \M (p(t)) \\
 p(t+1) = p(t) - dV(\gamma(t)),
\end{eqnarray}
   which define  a discrete dynamics on the phase space $\L \times \E^*.$  
    The resulting dynamics have some vague relation to the corresponding formal Hamiltonian
   $$H( \ell, p) = \fr{1}{2} p \M  p + V(\ell),$$
  but we aren't sure how to interpret the term $p\M p$.

\section{ Kepler's problem on $\Z$.}
 
The Laplacian $\Delta$ on $\Z$ is given   by $\Delta f (n) = f(n+1) - 2 f(n) + f(n-1)$.  

{\sc Exercise.}  Show that $U (n) =- \fr{1}{2} |n|$ is a fundamental solution for the Laplacian
  on $\Z$ with source at $S = 0$. 

  Take the Kepler constant  $\alpha = 2$  so  that the `Newtonian potential'  $V = -\alpha U$  is  $V = |n|.$
  The Hamiltonian is
  $$H(n, p) = \fr{1}{2} p^2 + |n|.$$
Since  $\fr{d}{d[n]} |n| = sgn(n)$ is the sign of $n$, $1$ if $n > 0$ and $-1$ if $n \leq 0$,
we find  that with this choice of $\alpha$  the discrete gradient is integer valued.  We get a good discrete
dynamical system.  
  Newton's equations (in 1st order form)  become 
$$n(j+1) = n(j) + p(j)$$
$$p(j+1) = p(j) - sgn(n).$$

A solution is depicted in Figure \ref{intpic}.
\begin{figure}
\centering
\includegraphics[width=.6\textwidth]{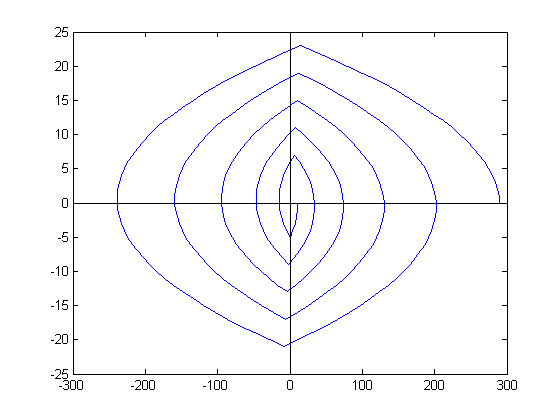}
\caption{A Kepler orbit in integer phase space}
\label{intpic}
\end{figure}

At each iteration, the `lattice momentum' $p$ decreases by one as long as  $n > 0$ and    increases by $1$
as long as $n < 0$.    (We have to make a choice at $n=0$; above we chose $sgn(0)=-1.$)
 Note that as long as the   initial condition $p(0)$ is an  integer,  it remains an integer, and we   stay on the lattice!

 Because of this happy coincidence with $p$'s evolution, we did not need to worry about where $p$ lived. It is a real number that
 happens to evolve to stay integral.  Our momentum space is  not  $\E^* \cong \R^2$.  (There are two directions, right and left,
 on the lattice, hence the dimension 2.)  We also did not need to choose a `lattice cometric'  $\M$.
 If any choice was made, it seems to have been   `$1$'  as written in the Hamiltonian.
 The happy coincidence does not happen when we go up to the rank 2 lattice.

 \subsection{ Kepler on the rank 2 lattice.}
 
 The rank 2 lattice is $\Z^2$ with elements written $\ell = (n, m)$.
 As a metric space, we use the distance
 $$d((n, m), (n^{\prime}, m^{\prime}) = |n-n^{\prime}| + |m - m^{\prime}|.$$
 The Laplacian is 
  $$\Delta f (\ell) =  \sum_{\ell^{\prime} : \ d (\ell, \ell^{\prime}) =  1} (  f(\ell^{\prime})- f(\ell)).$$
  Let $U$ denote the fundamental solution, this being
  the `most bounded' solution  to $\Delta U = \delta_{0}$, where $\delta_0$
  is the lattice delta function corresponding to placing the sun at the origin.
    (There is a lattice Liouville theorem, so $U$ can be made  unique up to an additive constant.) 
There is no closed form expression for   $U.$
  However, any particular value of $U$ can be computed recursively.
  Indeed, the fundamental solution is a well studied object with applications to
  the theory of electrical circuits (\cite{Cserti}), solid state physics, and quantum mechanics. 
 Some values of the lattice Green's function are reproduced  in Figure \ref{latticeGreen}
 from \cite{Hollos}. (We thank the brothers Hollos for permission to reproduce their table.)
  
   \begin{figure}
   \centering
   \includegraphics[width=.6\textwidth]{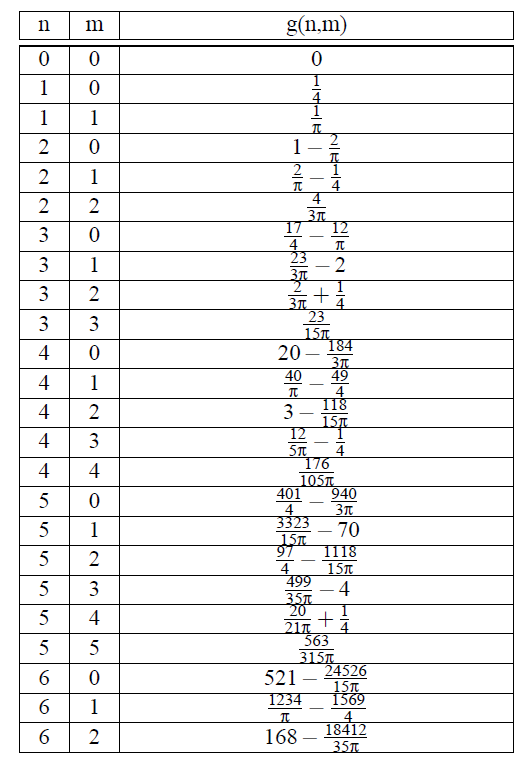}
   \caption{Values of the Green's function $g$ on $\mathbb Z^2$, taken from \cite{Hollos}}
   \label{latticeGreen}
   \end{figure}

  We write the formal Hamiltonian
  $$H(\ell, p) = \fr{1}{2} p \M p - \alpha U(\ell)$$
  and derive Hamilton's equations
  $$\ell (t+1) = \ell(t) + \M p(t)$$
  $$p(t+1) = p(t) + \alpha dU(\ell(t))$$
  where
  $$\M:  \R^4 = \E^* \to \Z^2.$$
  
   The vector $p$ is a 4-vector with components
  $(p_{up}, p_{down}, p_{right}, p_{left})$
  corresponding to the 4 edges, which are the 4 directions of motion,
  through each vertex.  We have, for example
  $dU(\ell)_{up} = U( \ell + e_2) - U(\ell)$
  where $e_2 = (0,1)$ represents motion in the `up' direction.
  The second Hamilton equation makes sense. 
  
  When we try to parse the first Hamilton equation
  we get   stuck.
  What do we take for $\M: \R^4 \to \Z^2$?
  We require $\M$  to be non-constant.
   Certainly $\M$ will not be  continuous! 
  Ideally $\M$ is `linear':
  $$\M (kp) = k \M (p),\quad k \in \Z,$$
  but this is probably not possible in any reasonable sense.   
  One possibility for $\M$ is to  argue that there is a  `canonical' projection
  $\Pi: \R^4 \to \R^2$, for example, 
  $\Pi (p_{up}, p_{down}, p_{right}, p_{left}) = \fr{1}{2}(p_{up} + p_{down}, p_{right} + p_{left})$,
  and a canonical  embedding of our lattice as  $\Z^2 \subset \R^2$.
  Then choose   
  $\M (p)$ to be the lattice point closest to $\Pi(p)$.
  This leaves us to worry about what to do if $\Pi(p)$ is midway between lattice points.
  Flip a coin?
  
  We are stuck and look forward to some of our readers
  unsticking us.

\subsubsection{Euler-Lagrange formulation.}  

We can make a bit more sense of the Euler Lagrange version of
lattice dynamics.   Fix a  positive integer $T$, the `time of flight,'
and initial and final vertices, $v_0, v_1 \in \Z^2$. 
There will be two formulations.
In both, we consider   discrete paths $\gamma: \{0, 1, \ldots, T \} \to \Z^2$ 
which join $v_0$ to $v_1$ in time $T$,  and we minimize an `action functional'
$A$ among all such discrete paths.

Version 1:  Minimize the action 
$$A (\gamma) = \sum_{t=0} ^T  \{ \fr{1}{2} (| \fr{d}{d[t]} \gamma (t)  |_1 )^2 + \alpha U(\gamma(t)) \}$$
among all discrete paths $\gamma$  joining $v_0$ to $v_1$ in discrete time $T$.

Here $| \fr{d}{d[t]} \gamma (t)  |_1 = d(\gamma(t+1), \gamma(t)),$ so half of  its square represents kinetic energy. 

Version 2:  Call a discrete path `continuous' if either $d(\gamma(t+1) , \gamma(t)) = 1$
or $\gamma(t+1) = \gamma(t)$.  Minimize the same action as Version 1,
but now over all continuous paths.   (In this case the kinetic term  $ \fr{1}{2} (| \fr{d}{d[t]} \gamma (t)  |_1 )^2$
is either $1/2$ or $0$ at each time step.)

We are guaranteed a solution   to Version 2   exists
since there are   only a finite number of  `continuous' paths joining $v_0$ to $v_1$.
We suspect that if we move too fast the kinetic energy becomes too large, so that
Version 1 is `coercive' and one can argue that again there are only a finite number
of paths that matter.

It seems   doubtful that any decent Euler-Lagrange type 
 difference equation  `dynamics' will result from either principle.  Indeed, take the case  $\alpha = 0$
  of a   `free particle'
on the latttice, and take $v_0 = (0,0),\  v_1 = (n,m)$,  $n > m \ge 0$.   There are $(n+1)m$ shortest paths from $v_0$ to $v_1$.
Just draw box-paths, always moving either right or up. 
Their lengths are all   $ n + m = d(v_0, v_1)$. 
If $T< d(v_0, v_1)$ then there are no paths connecting the two points.
If $T = d(v_0, v_1)$
then their  actions are all $\fr{1}{2} T$.
If $T > d(v_0, v_1)$ the action remains the same;  we just stay still for the requisite times
$T- d(v_0, v_1)$.   This means either (i) all points $v_1 = (n,m)$  with $nm \ne 0$ are conjugate to $v_0$,
or (ii)  that there is no good `free' dynamical equation, so likely no good Euler-Lagrange equations
in general.

   \section{ Quantum Mechanics to Classical Mechanics on Cayley Graphs? }

By a graph here we mean the usual combinatorial collection of vertices and
edges.  We write   $\Gamma$  for the set of vertices and view $\Gamma$ as `configuration space.'
The graph Laplacian is  the operator
      $\Delta:  \ell_2 (\Gamma) \to \ell_2 (\Gamma)$
      defined by 
       $$\Delta f (v) =  \sum_{v^{\prime}: [v, v^{\prime} ] \text{ an edge}}   (f(v') - f(v)).$$
       If $\Gamma$ is finite there will be no fundamental solution; that is, there is no solution
       to $\Delta U_S = \delta_S$ where 
  where $\delta_S $ is the discrete $\delta$ function
  centered at the sun:  $\delta_S (S) = 1, \delta_S (v) =  0, v \ne S$.
  If $\Gamma$ is finite, a necessary condition for the  solvability
  of $\Delta V = f$ is  $\Sigma f(v) = 0$, which will fail for $f=\delta_S$.
  
   Regardless of whether or not
   $\Gamma$ has a Green's function, it has plenty of 
   potentials, meaning functions   $V \in \ell_2 (\Gamma)$.  Consequently for  each choice of Planck's constant $\hbar$
  we have a Schrodinger operator: 
  $$\hbar ^2\Delta + V  : \ell_2 (\Gamma) \to \ell_2 (\Gamma).$$
  There is a large active field of graph Laplacians and quantum mechanics on graphs. 
There is undoubtedly a theory of  quantum mechanics on $\Gamma$.
  
 {\bf  Challenge.  } Don't you think this
quantum mechanics ought to have a   classical limit?  If `yes' then please answer: what  are 
the correct   Newton's equations for an arbitrary potential, on an arbitrary graph? 
 
 \subsubsection{Cayley graph of a group}   Let $\Gamma$ be a finitely generated group and  $e_1, \ldots, e_d \in \Gamma$ be
 a fixed  set of generators for $\Gamma$ (so every element of $\Gamma$
 is a product of the $e_i$'s or their inverses).  Form the  graph whose  vertices are the elements 
  $x \in \Gamma$ and for which  two vertices  $x, y \in \Gamma$ are  joined  by an edge if and only if
 either $y = x e_i$ or $x =  ye_i$ for some generator $e_i$.  
 Count each edge as having length $1$. Define the distance between points
 $x$ and $y$ in $\Gamma$  to be the minimum of the lengths of the paths joining $x$ to $y$.
 This distance is always an integer, since the length of a path is just the number of edges it contains.
 
 In this representation, the `Lie algebra' of the Cayley graph will be the tangent space at the identity:
  the disjoint union of $d$ copies of $\Z$.  Alternatively,  it is the subset of $\Z^d$
 consisting of vectors for which all but one component is zero. 

{\it Example: Lattices.}  Take $\Gamma = \Z ^2$ to be the lattice of integers in the plane, with standard  generators $e_1 = (1,0), e_2 = (0,1)$.  Then the Cayley graph of    $\Z^2$ realized as above  
has the vertices of a standard infinite sheet of graph paper in $\R^2$.   Its Lie algebra
consists of integer points on the $x$-axis unioned with the collection of integer points on the $y$-axis. 

  \subsubsection{Kepler symmetries of Cayley graphs.}
 Every  Cayley graph satisfies Keplerian symmetry property  (1) of being homogeneous since $\Gamma$  acts on itself on the right  by isometries.
 View  the generators as the `directions.'  Then if  the  automorphism group  of the group $\Gamma$ acts transitively on its generating set 
 $e_1, \ldots, e_d,$ the metric is isotropic; it satisfies 
 Keplerian symmetry property  (2).  Finally we can send $e_i$ to $e_i^k$.
 In some instances this defines a group homomorphism of $\Gamma$ {\it into } itself.
 Then the Cayley graph admits one-sided dilations and so satisfies (3). 
 The   examples we know of groups whose Cayley graphs satisfy (1), (2) and (3) are the lattices $\Z^d$, the
 lattices in nilpotent groups,  and the free group on $d$
 generators.  In the continuous case, we know how to derive a Kepler's third law from the Keplerian symmetry (3).  Is there an analogous construction in the discrete case?
 
   \subsubsection{Full Disclosure -- R.M. }
  
   I have little
  interest in any kind of graph for its own sake. 
  I am not a combinatorist, nor a discrete group theorist! 
  
 In contrast to the dozens of books that Jerry wrote in his life,   I have mustered  the courage and stamina
    to write a single  book in this life.
 (Jerry continues to amaze.) In that book I devoted a chapter to trying to understand  one of the big
 ideas of Gromov  in his paper   `On Groups of Polynomial Growth \dots' (\cite{GromovGroup1}) in which he    used  \sR ideas  
 to solve  a problem in  discrete group theory.  Consider a discrete finitely generated group
  $\Gamma$. Select some generators and form the group's Cayley graph. We say the group is   `of polynomial growth'
  if the number of vertices of the Cayley graph lying inside a ball of radius $R$ is bounded by a   polynomial in $R$ as $R \to \infty$.
  (If $\Gamma$  is of polynomial growth with respect to one set of generators, it is of polynomial growth with respect to
  any other set of generators.)
  The lattices, and the integer lattice in the Heisenberg group are examples of groups of polynomial growth.
  More generally, the lattices in any Carnot group are of polynomial growth. 
  The free group on 2 generators in not of polynomial growth: its balls have exponential growth, roughly $3^R$.  
  There is a notion of a group being `virtually nilpotent,' and it was known that virtually nilpotent implies polynomial growth.
  Gromov proved the converse:  polynomial growth implies virtually nilpotent.

  Gromov's paper is mind-blowing --    the most astounding application of \sR geometry
that I know of   made by a human.  (Cats and micro-organisms have made their own astounding applications.)
Gromov scales the edges of the Cayley graph by $\epsilon$, then  takes the limit as $\epsilon \to 0$. 
He  proves, in essence,  that the result converges to a Carnot group -- a metric of \sR type on 
a nilpotent Lie group -- and from this the theorem easily follows.
 (I am  stretching the truth  here, but that is the spirit of Gromov's paper.  There are many technicalities.)
What I  find so compelling  about Gromov's paper   is the going back and forth between the  wonderful world of smooth
 metric spaces -- Lie groups even -- 
which I know and love, and  the   chopped up world of discrete objects that I find so frightening at times.
Can we similarly go back and forth in dynamics?  That is what I would like to see in
some `Kepler problem on a lattice.'

  \newpage
\bibliographystyle{amsplain}

  \end{document}